\newtheorem{thm}{Theorem}[section]
\newtheorem{obs}{Remark}[section]
\newtheorem{defin}{Definition}[section]
\numberwithin{equation}{section}
\begin{document}
\title[vortex stretching and criticality]
{Vortex stretching and criticality for the 3D NSE}
\author{R. Dascaliuc}
\address{Department of Mathematics\\
Oregon State University\\ Corvallis, OR 97331}
\author{Z. Gruji\'c}
\address{Department of Mathematics\\
University of Virginia\\ Charlottesville, VA 22904}
\dedicatory{Dedicated to Professor Peter Constantin on the occasion
     of his $60$th birthday, with admiration}
\date{\today}

\begin{abstract}
A mathematical evidence -- in a statistically significant sense --
of a \emph{geometric scenario} leading to \emph{criticality} of the
Navier-Stokes problem is presented.
\end{abstract}

\maketitle

\newpage

\section{Prologue}

\noindent 3D Navier-Stokes equations (NSE) -- describing a flow of
3D incompressible viscous fluid -- read
\[
 u_t+(u\cdot \nabla)u=-\nabla p + \triangle u,
\]
supplemented with the incompressibility condition $ \, \mbox{div} \,
u = 0$, where $u$ is the velocity of the fluid and $p$ is the
pressure (here, the viscosity is set to $1$); taking the curl yields
the vorticity formulation,
\[
 \omega_t+(u\cdot \nabla)\omega= (\omega \cdot \nabla)u + \triangle
 \omega,
\]
where $\omega = \, \mbox{curl} \, u$ is the vorticity of the fluid.

\medskip

It is well known that both globally \cite{Le34, H51} and
uniformly-locally (with suitable spatial decay at infinity) \cite{L-R02}
finite energy data generate global-in-time weak (distributional)
solutions to the 3D Navier-Stokes equations (NSE), satisfying global
and local energy inequality, respectively. Despite much effort --
since the pioneering work of Leray \cite{Le34} in 1930's -- the question of
whether weak solutions may exhibit finite-time singularities remains
an open problem. It is known that the set of all possible
singularities is small -- the one-dimensional (parabolic) Hausdorff
measure of the singular set in $\Omega \times (0,T)$ is zero for any
$T>0$ \cite{CKN82}; here, $\Omega$ is a global spatial domain.

\medskip

There are various regularity criteria preventing finite-time
formation of singularities, mainly expressed either as a local or a
global condition on a weak solution over a spatiotemporal domain, or
as a condition on a regular solution approaching a potential
singular time $T^*$. The conditions are given as boundedness of a
suitable spatiotemporal norm, the common trait being that the norm
is scaling-invariant (critical) with respect to the natural scaling
in the Navier-Stokes model. In contrast, the \emph{a priori} bounded
quantities are all subcritical; moreover, there is a \emph{scaling
gap} between a  regularity criterion in view and the corresponding
\emph{a priori} bounded quantity. As an illustration, here are two
classical examples -- for the velocity formulation -- in $L^p$ and
$BMO$ spaces. The regularity criteria are boundedness in $L^\infty_t
L^3_x$ \cite{ISS03} and $L^2_t BMO_x$ \cite{KT00}, and the corresponding \emph{a
priori} bounded quantities are $L^\infty_t L^2_x$ \cite{Le34, H51} and $L^1_t
L^\infty_x$ \cite{FGT81}, respectively. These are manifestations of
\emph{supercriticality} of the Navier-Stokes problem.

\medskip

A rigorous study of \emph{geometric depletion of the nonlinearity}
in the 3D NSE (as well as in the 3D Euler equations) was pioneered
by Constantin, and it was based on a singular integral
representation for $\alpha$ -- the stretching factor in the
evolution of the vorticity magnitude $|\omega|$ depleted by
\emph{local coherence of the vorticity direction} -- ``the story of
alpha and omega'' \cite{Co94}. This is fundamental as there is ample
evidence -- both numerical \cite{AKKG87, JWSR93, S81, VM94, SJO91} and
theoretical \cite{CPS95, GGH97, GFD99, Oh09} -- that regions of intense
vorticity tend to self-organize in \emph{coherent vortex
structures}, most notably, quasi one-dimensional vortex filaments,
displaying strong local coherence of the vorticity direction.

\medskip

The mechanism of the geometric depletion of the nonlinearity was
subsequently exploited in \cite{CoFe93} to show that as long as the regions
of intense vorticity exhibit local Lipschitz-coherence of the
vorticity direction, no finite-time blow up can occur, and later in
\cite{daVeigaBe02} where the Lipschitz-coherence was replaced by the
$\frac{1}{2}$-H\"older coherence. A spatiotemporal localization of
the $\frac{1}{2}$-H\"older coherence regularity criterion was
performed in \cite{GrZh06, Gr09}, and independently in \cite{ChKaLe07}. The
aforementioned regularity criteria are all pointwise
coherence conditions; hence, necessarily supercritical with respect to the NSE
scaling. A local, scaling-invariant (critical) criterion over a
parabolic cylinder below a potential singular point $(x_0,t_0)$,
\[
 \int_{t_0-(2R)^2}^{t_0} \int_{B(x_0,2R)} |\omega(x,t)|^2
 \rho_{\frac{1}{2}, 2R}^2 (x,t) \, dx \, dt < \infty
\]
where
\[
 \rho_{\gamma, r}(x,t) = \sup_{y \in B(x,r), y \neq x} \frac{\Bigl|\sin
 \varphi \Bigl(\eta(x,t), \eta(y,t)\Bigr)\Bigr|}{|x-y|^\gamma}
\]
is a $\gamma$-H\"older measure of coherence of the vorticity
direction $\eta$ at the point $(x,t)$, was presented in \cite{GrGu10-1}. On
the other hand, a corresponding (subcritical) \emph{a priori} bound
had been previously obtained in \cite{Co90},
\[
 \int_0^T \int_{\mathbb{R}^3} |\omega(x,t)| |\nabla \eta (x,t)|^2 \,
 dx \, dt \le \frac{1}{2} \nu^{-2} \int_{\mathbb{R}^3} |u_0(x)|^2 \,
 dx
\]
where $\nu$ is the viscosity.

\medskip

A different geometric approach to the study of possible singularity
formation in 2D and 3D incompressible flows was developed by Cordoba
and Fefferman \cite{CF01-1, CF01-2, CF02}; in particular, non-existence of `tube
collapse singularities' in 3D incompressible inviscid flows was
shown in \cite{CF01-2}, and non-existence of a more general class of
`squirt singularities' in incompressible flows -- including the
flows described by the 3D NSE -- was presented in \cite{CFD04}.

\medskip

The purpose of this Note is to present a mathematical evidence -- in
a statistically significant sense -- of a geometric scenario leading
to \emph{criticality} of the Navier-Stokes problem. More
precisely, utilizing the \emph{ensemble averaging process}
introduced in our recent study of turbulent cascades in
\emph{physical scales} of 3D incompressible fluid flows
\cite{DaGr11-1, DaGr11-2, DaGr12-1, DaGr12-2}, we
show that the ensemble-averaged vortex stretching term is positive
across a range of scales extending from a square root of a
Kraichnan-type micro-scale to the macro-scale. Combining this with the
\emph{a priori} bound on the decrease of the distribution function
of the vorticity obtained by Constantin in \cite{Co90} -- as well as with
the general mechanism of creation and dynamics of vortex
filaments in turbulent flows (cf. \cite{CPS95}) -- indicates a
geometric scenario in which the region of intense vorticity (defined
as the region in which the vorticity magnitude -- near a possible
singular time -- exceeds a fraction of the $L^\infty$-norm) comprises
of macro-scale-long vortex filaments with the diameters of the
cross-sections scaling like
$\displaystyle{\frac{1}{\|\omega(t)\|_\infty^\frac{1}{2}}}$. This is
exactly the \emph{scale} of \emph{local one-dimensional sparseness}
of the region of intense vorticity needed to prevent a formation of
a finite-time singularity \cite{Gr12}.

\section{Geometric measure-type regularity criterion}

In this section, we briefly recall a \emph{geometric measure-type}
regularity criterion for solutions to the 3D NSE obtained
recently by one of the authors; for details, see \cite{Gr12}.

\medskip

\begin{defin}
Let $x_0$ be a point in $\mathbb{R}^3$, $r>0$, $S$ an open subset of
$\mathbb{R}^3$ and $\delta$ in $(0,1)$.

\medskip

The set $S$ is \emph{linearly $\delta$-sparse around $x_0$ at scale
$r$ in weak sense} if there exists a unit vector $d$ in $S^2$ such
that
\[
 \frac{|S \cap (x_0-rd, x_0+rd)|}{2r} \le \delta.
\]
\end{defin}

\medskip

For $M>0$, denote by $\Omega_t(M)$ the vorticity super-level set at
time $t$; more precisely,
\[
 \Omega_t(M) = \{x \in \mathbb{R}^3: |\omega(x,t)| > M\}.
\]
The vorticity version of the local one-dimensional (linear)
sparseness regularity criterion is as follows.

\medskip

\begin{thm}\label{sparse_omega}
Suppose that a solution $u$ is regular on an interval $(0,T^*)$.

\medskip

Fix $\delta$ in $(0,1)$, and let
$h=h(\delta)=\frac{2}{\pi}\arcsin\frac{1-\delta^2}{1+\delta^2}$ and
$\alpha=\alpha(\delta)\ge\frac{1-h}{h}$. Assume that there exists
$\epsilon >0$ such that for any $t$ in $(T^*-\epsilon, T^*)$, either

\medskip

(i) \ $\displaystyle{t+\frac{1}{d_0^2 \|\omega(t)\|_\infty} \ge
T^*}$ ($d_0$ is an absolute constant appearing in the local-in-time
analytic smoothing in $L^\infty$; cf. \cite{Gr12}), or

\medskip

(ii) \ there exists $s=s(t)$ in $\Bigl[t+\frac{1}{4d_0^2
\|\omega(t)\|_\infty}, t+\frac{1}{d_0^2 \|\omega(t)\|_\infty}\Bigr]$
such that for any spatial point $x_0$, there exists a scale $r$,
$0<r\le \frac{1}{2d_0^2 \|\omega(t)\|^\frac{1}{2}_\infty}$, with the
property that the super-level set $\Omega_s(M)$ is linearly
$\delta$-sparse around $x_0$ at scale $r$ in weak sense; here,
$M=M(\delta)=\frac{1}{d_0^\alpha} \|\omega(t)\|_\infty$.

\medskip

Then, there exists $\gamma >0$ such that $\omega$ is in
$L^\infty\Bigl((T^*-\epsilon, T^*+\gamma); L^\infty\Bigr)$, i.e.,
$T^*$ is not a singular time.
\end{thm}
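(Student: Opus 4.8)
The plan is to argue by contradiction: assume $T^*$ is a singular time and show that hypotheses (i)--(ii) force $\|\omega(t)\|_\infty$ to be non-increasing along a sequence of times advancing toward $T^*$, which is incompatible with blow-up. First I would recall that a singularity at $T^*$ forces $\|\omega(t)\|_\infty \to \infty$ as $t \to T^{*-}$ (the standard $L^\infty$-type continuation principle), so that for $t$ close to $T^*$ the local existence interval $\frac{1}{d_0^2\|\omega(t)\|_\infty}$ shrinks and alternative (i) fails. Hence (ii) is in force, providing a time $s=s(t)$ at which the super-level set $\Omega_s(M)$ is linearly $\delta$-sparse around every point $x_0$ at a scale $r \le \frac{1}{2d_0^2\|\omega(t)\|_\infty^{1/2}}$.

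The analytic engine is the local-in-time analytic smoothing in $L^\infty$ from \cite{Gr12}: since $u$ is regular up to $s<T^*$, the vorticity $\omega(\cdot,s)$ extends holomorphically to a complex ball of radius comparable to $\sqrt{s-t}\sim \frac{1}{d_0^2\|\omega(t)\|_\infty^{1/2}}$ around each real point, with the complexified modulus bounded by $\mathcal{M}\sim d_0\|\omega(t)\|_\infty$. The crucial observation is that this analyticity radius is exactly the sparseness scale $r$, so that the two can be played against each other. The heart of the proof is then a one-dimensional analytic-geometric estimate: fix $x_0$ and the sparseness direction $d$, and set $F(\zeta)=\omega_j(x_0+\zeta d,s)$, holomorphic on $\{|\zeta|<\rho\}$ and bounded by $\mathcal{M}$. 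By $\delta$-sparseness, the subset of the real diameter $(-r,r)$ on which $|F|\le M$ has relative measure at least $1-\delta$. Since $\log|F|$ is subharmonic, bounded above by $\log M$ on this good set and by $\log\mathcal{M}$ on the disk, the two-constants theorem gives $|F(x_0)| \le M^{h}\,\mathcal{M}^{1-h}$, where $h$ is the harmonic measure, seen from the center, of the good portion of the diameter. Computing this harmonic measure via the conformal (Joukowski-type) map sending the disk to a slit domain produces precisely $h=h(\delta)=\frac{2}{\pi}\arcsin\frac{1-\delta^2}{1+\delta^2}$; taking the supremum over the finitely many components and over $x_0$ yields $\|\omega(s)\|_\infty \lesssim M^{h}\mathcal{M}^{1-h}$.

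It then remains to close the iteration. Substituting $M=\frac{1}{d_0^\alpha}\|\omega(t)\|_\infty$ and $\mathcal{M}\sim d_0\|\omega(t)\|_\infty$ gives $\|\omega(s)\|_\infty \lesssim d_0^{\,(1-h)-\alpha h}\|\omega(t)\|_\infty$; the threshold $\alpha\ge\frac{1-h}{h}$ makes the exponent of $d_0$ nonpositive, so (as $d_0\ge 1$) one obtains $\|\omega(s)\|_\infty\le\|\omega(t)\|_\infty$. Starting from some $t_0$ near $T^*$ with $M_0=\|\omega(t_0)\|_\infty$ and iterating $t_{n+1}=s(t_n)$, the norms stay bounded by $M_0$ while each step advances time by at least $\frac{1}{4d_0^2 M_0}$; since the steps cannot accumulate, after finitely many of them the existence interval reaches $T^*$, i.e.\ alternative (i) is triggered, so regularity persists past $T^*$. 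This contradicts blow-up and yields $\omega\in L^\infty\bigl((T^*-\epsilon,T^*+\gamma);L^\infty\bigr)$.

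I expect the main obstacle to be the analytic-geometric estimate of the second paragraph: both pinning down the quantitative analytic smoothing (the radius $\sim\|\omega(t)\|_\infty^{-1/2}$ and the complexified bound $\mathcal{M}$) and carrying out the exact harmonic-measure computation that produces $h(\delta)$, while tracking the absolute constants carefully enough that the condition $\alpha\ge\frac{1-h}{h}$ genuinely delivers a \emph{non-increase} of $\|\omega(s)\|_\infty$ rather than a mere bound. Everything else -- the continuation principle, the fallback to (i), and the finite iteration -- is comparatively routine once this estimate is in hand.
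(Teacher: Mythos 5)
Your architecture coincides with that of the actual proof, which the paper only sketches, deferring all details to \cite{Gr12}: the local-in-time analytic smoothing in $L^\infty$ providing a complex analyticity radius $\sim \|\omega(t)\|_\infty^{-1/2}$ matched to the sparseness scale, the harmonic measure majorization (two-constants) principle applied to the one-dimensional complex disks in the sparseness directions, the exponent balance $\alpha \ge \frac{1-h}{h}$ converting the bound $M^h\mathcal{M}^{1-h}$ into non-increase of the vorticity sup-norm, and the finite iteration with fallback on alternative (i). All of that is faithful to the intended argument.

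There is, however, a genuine gap at exactly the step you flag as the expected obstacle, and it is not merely a matter of "tracking constants." You claim that the harmonic measure $h$ of the good set, seen from the center, can be \emph{computed} via a Joukowski-type conformal map and "produces precisely" $h(\delta)=\frac{2}{\pi}\arcsin\frac{1-\delta^2}{1+\delta^2}$. But the good set $E=\{|F|\le M\}\cap(-r,r)$ is an \emph{arbitrary} measurable subset of the diameter of relative measure at least $1-\delta$ -- the sparseness hypothesis is "in weak sense," i.e., purely measure-theoretic, with no structural information whatsoever. The harmonic measure $\omega\bigl(0, E, D\setminus E\bigr)$ depends on the configuration of $E$, not only on $|E|$, so no conformal map computes it. What the proof actually requires is the sharp \emph{configuration-independent lower bound}: among all closed subsets of the diameter of prescribed measure, the harmonic measure at the center is minimized by a specific extremal configuration, whose harmonic measure equals $h(\delta)$. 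This extremal problem is a generalization of the classical Beurling problem \cite{Beu33}; it was conjectured by Segawa \cite{Seg88} and solved by Solynin \cite{Sol99} (the symmetric case earlier by Ess\'en and Haliste \cite{EssHa89}), and the paper explicitly identifies it as the key geometric ingredient. The conformal-map calculation you invoke only evaluates the harmonic measure of the extremal configuration; without the extremality theorem, the inequality $h\ge h(\delta)$ for an arbitrary good set -- and hence the decay step $\|\omega(s)\|_\infty\le\|\omega(t)\|_\infty$ on which your whole iteration rests -- does not follow.
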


\medskip

The proof is based on a very intimate interplay between the
diffusion in the model -- represented by the local-in-time analytic
smoothing in $L^\infty$ -- and the geometric properties of the
harmonic measure (via the \emph{harmonic measure majorization
principle}).

\medskip

The analyticity estimate on solutions needed is a vorticity version
of the estimate given in \cite{Gu10}; this was based on a general
method for estimating uniform radius of spatial analyticity in
$L^p$-spaces introduced in \cite{GrKu98}, which was in turn inspired
by the (analytic) Gevrey-class method presented in \cite{FT89} (see
also \cite{FeTi98}).

\medskip

The key geometric harmonic measure estimate used in the proof is a
generalization of the classical Beurling's problem \cite{Beu33},
conjectured in \cite{Seg88} and solved by Solynin in \cite{Sol99} (a
symmetric version of the problem was previously resolved in
\cite{EssHa89}); for more details, see \cite{Gr12}.

\medskip

\begin{obs}
\emph{A rudimentary version of Theorem \ref{sparse_omega}
was previously obtained
in \cite{Gr01}. The condition needed in \cite{Gr01} is a much
stronger condition; essentially, a requirement of a local existence
of a sparse \emph{coordinate projection}. In contrast, all that is
needed here is a local sparseness of an one-dimensional \emph{trace}
in a \emph{very weak sense}.}
\end{obs}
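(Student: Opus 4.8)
The plan is to substantiate the Remark by pinning down the precise logical relationship between the two hypotheses, namely that the condition of \cite{Gr01} implies condition (ii) of Theorem \ref{sparse_omega} but not conversely; this is exactly the content of the phrase ``much stronger.'' First I would recall and formalize the criterion of \cite{Gr01}: around each spatial point one requires the existence of a \emph{coordinate} direction $e_i$ (one of the three axes) along which the super-level set casts a thin \emph{shadow}, i.e. the one-dimensional projection $\pi_{e_i}\bigl(\Omega_s(M)\bigr)$ occupies only a small fraction of the working scale. The two structural differences to isolate are then (a) the admissible directions -- the three coordinate axes in \cite{Gr01} versus the full sphere $S^2$ in (ii) -- and (b) the quantity being constrained -- the full projection (shadow), which sees every point of the set, in \cite{Gr01} versus a single one-dimensional trace $\Omega_s(M)\cap(x_0-rd,x_0+rd)$ here.

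The forward implication is the elementary half. Since the restriction of $\pi_d$ to any line $\ell$ parallel to $d$ is an isometry, one has the pointwise bound
\[
 \bigl|\Omega_s(M)\cap\ell\bigr| \;=\; \bigl|\pi_d\bigl(\Omega_s(M)\cap\ell\bigr)\bigr| \;\le\; \bigl|\pi_d\bigl(\Omega_s(M)\bigr)\bigr|,
\]
so a thin shadow in a direction $d$ forces a thin trace on \emph{every} line parallel to $d$; taking $d=e_i$ and $\ell$ through $x_0$ immediately yields the weak-sense sparseness demanded in (ii). I would then check that the level $M=M(\delta)$, the smoothing window $[t+\tfrac{1}{4d_0^2\|\omega(t)\|_\infty},\,t+\tfrac{1}{d_0^2\|\omega(t)\|_\infty}]$, and the admissible scale $r\le\tfrac{1}{2d_0^2\|\omega(t)\|_\infty^{1/2}}$ sit inside the thresholds used in \cite{Gr01}, so that Theorem \ref{sparse_omega} genuinely recovers the earlier criterion as a special case rather than merely an analogous one.

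The strictness -- that (ii) does \emph{not} imply the \cite{Gr01} condition -- is where the real content lies, and the natural witness is precisely the object emphasized throughout this Note: a thin, tangled vortex filament. A tube of cross-sectional diameter $\rho\ll r$ has, transverse to its core, a trace of length $\lesssim\rho$, so (ii) holds at every interior point with $d$ chosen transverse to the local filament direction; yet if the filament winds through space, its shadow on each coordinate axis can fill an interval of macroscopic length, so no coordinate projection is sparse. Even a single straight tube aligned along a generic, non-coordinate direction already defeats the coordinate-projection requirement while trivially satisfying (ii), which separates the two conditions already at the level of (a).

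The main obstacle I anticipate is not the geometry -- both the projection-trace inequality and the filament example are transparent -- but the bookkeeping needed to align the two parameter regimes. Reconstructing the exact sparseness threshold, super-level value, and admissible scale of \cite{Gr01} and verifying that they lie within the ranges permitted by Theorem \ref{sparse_omega} is what upgrades the qualitative ``much stronger'' into a rigorous implication-plus-strictness statement; once that matching is in hand, everything else reduces to the two elementary observations above.
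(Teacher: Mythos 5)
This statement is a \emph{Remark} -- expository commentary -- and the paper offers no proof of it at all: the comparison with \cite{Gr01} is substantiated only by citation and by juxtaposing the hypotheses of the two results. Your proposal therefore goes well beyond what the paper does, upgrading the qualitative phrase ``much stronger'' to an implication-plus-strictness claim, and the mathematical core of your plan is sound. The forward implication is correct as you state it: since the orthogonal projection $\pi_d$ restricted to any line parallel to $d$ is an isometry, and $\pi_d\bigl(\Omega_s(M)\cap\ell\bigr)\subseteq\pi_d\bigl(\Omega_s(M)\bigr)$ (localized to the relevant ball or cube), a sparse shadow in a coordinate direction forces a sparse trace on \emph{every} parallel line, in particular the one through $x_0$; so the trace condition of Theorem \ref{sparse_omega} is implied by -- and demands strictly less than -- the projection condition. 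Your separating examples are also the right ones: a straight tube of cross-section $\rho\ll r$ along a generic non-coordinate direction has all three coordinate shadows of length comparable to $r$ inside any ball of radius $r$ it traverses, defeating the coordinate-projection requirement, while every point admits a transverse $d$ with trace of length at most $\rho$; this separates the conditions already at the level of admissible directions ($S^2$ versus the three axes), and the winding filament separates them at the level of projection-versus-trace. This genuinely captures why the weakening matters for the criticality scenario of the paper, where the intense-vorticity region consists of filaments with no preferred axis alignment.

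The one place where your plan could founder is exactly the step you flag as ``bookkeeping,'' and it deserves a stronger warning: \cite{Gr01} is a different paper, with its own super-level threshold, smoothing window, admissible scales, and constants (the analyticity machinery there is coordinate-wise, which is precisely \emph{why} it needs coordinate projections, whereas Theorem \ref{sparse_omega} uses a direction-uniform analyticity radius and the harmonic measure majorization principle). A literal ``Theorem \ref{sparse_omega} contains the \cite{Gr01} criterion as a special case'' need not survive this parameter reconciliation -- the levels $M(\delta)$, the window $\bigl[t+\tfrac{1}{4d_0^2\|\omega(t)\|_\infty},\,t+\tfrac{1}{d_0^2\|\omega(t)\|_\infty}\bigr]$, and the scale cap $\tfrac{1}{2d_0^2\|\omega(t)\|_\infty^{1/2}}$ are tied to constants specific to the present setup -- and the Remark carefully avoids claiming it, saying only that the earlier result is a ``rudimentary version'' resting on a ``much stronger'' hypothesis. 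If the thresholds cannot be matched exactly, your argument still establishes the honest, and fully adequate, version of the Remark: the geometric hypothesis of \cite{Gr01} is strictly more demanding than the trace hypothesis here, at comparable levels and scales; you should state it in that hedged form rather than as an exact subsumption.
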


\section{The region of intense vorticity}

There is strong numerical evidence that the regions of high
vorticity organize in coherent vortex structures \cite{S81, AKKG87,
SJO91, JWSR93, VM94} and in particular, in elongated
vortex filaments (tubes). In addition, an in-depth analysis of
creation and dynamics of vortex tubes in 3D turbulent
incompressible flows was
presented in \cite{CPS95} (see also \cite{GGH97, GFD99, Oh09}).

\medskip

Consider a flow near the first (possible) singular time $T^*$, and
define \emph{the region of intense vorticity} at time $t < T^*$ to be
the region in which the vorticity magnitude exceeds a fraction
of $\|\omega(t)\|_\infty$; keeping the notation from the previous
section, this corresponds to the set
$\displaystyle{\Omega_t\Bigl(\frac{1}{c_1}
\|\omega(t)\|_\infty}\Bigr)$, for some $c_1>1$.

\medskip

Denote a suitable \emph{macro-scale} associated with the flow by $R_0$.
The picture painted by numerical simulations indicates that
the region of intense vorticity comprises -- in a statistically significant
sense -- of vortex filaments with the lengths comparable to $R_0$.

\medskip

Let us for a moment accept this as a probable geometric blow up
scenario. The length scale associated with the diameters of the
cross-sections can then be estimated \emph{indirectly}, by
estimating the rate of the decrease of the total volume of the region
of intense vorticity $\displaystyle{\Omega_t\Bigl(\frac{1}{c_1}
\|\omega(t)\|_\infty}\Bigr)$.

\medskip

Taking the initial vorticity to be a finite Radon measure,
Constantin showed \cite{Co90} that the $L^1$-norm of the vorticity
is \emph{a priori} bounded over \emph{any} finite time-interval; a
desired estimate on the total volume of the region of intense
vorticity follows simply from the Tchebyshev inequality,
\[
 \, \mbox{Vol} \, \biggl( \Omega_t \Bigl(\frac{1}{c_1}
\|\omega(t)\|_\infty \Bigr) \biggr) \le \frac {c_2}{\|\omega(t)\|_\infty}
\ \ (c_2>1).
\]
This implies the decrease of the diameters of the cross-section of
at least $\displaystyle{\frac {c_3}{\|\omega(t)\|^\frac{1}{2}_\infty}}$
$(c_3>1)$,
which is exactly the scale of \emph{local one-dimensional sparseness}
of the region of intense vorticity needed to prevent the formation of singularities
presented in Theorem \ref{sparse_omega}. In other words, the
Navier-Stokes problem in this scenario becomes \emph{critical}.

\medskip

A key step in justifying this scenario is providing a
\emph{mathematical evidence} of persistence -- in a statistically
significant sense -- of the $R_0$-long vortex filaments (at this
point, the evidence is purely numerical). A term responsible for the
creation of vortex filaments is the \emph{vortex-stretching term},
\[
 (\omega \cdot \nabla)u \cdot \omega = S \omega \cdot \omega,
\]
where $S$ is the strain matrix. One way to identify the range of
(longitudinal) scales at which the dynamics of creation and
persistence of vortex filaments takes place is to identify the
\emph{range of scales of positivity} of $S \omega \cdot \omega$. In
the following section, we will show that the range of positivity of
$S \omega \cdot \omega$ -- in a statistically significant sense --
extends from a power of a Kraichnan-type micro-scale to
the macro-scale $R_0$. It is worth
pointing out that the argument is  \emph{dynamic} -- based on
ensemble averaging local dynamics described by the full 3D
Navier-Stokes system.

\section{A dynamic estimate on the vortex-stretching term across a range of scales}

We begin by recalling the concept of \emph{ensemble averaging} with
respect to $(K_1,K_2)$\emph{-covers at scale} $R$, introduced in our
work on existence and locality of turbulent cascades in
\emph{physical scales} of 3D incompressible flows \cite{DaGr11-1,
DaGr11-2, DaGr12-1, DaGr12-2} (for more details, see, e.g.,
\cite{DaGr12-2}).

\medskip

Let $R_0 > 0$, and assume (for convenience) that the macro-scale
domain of interest is the ball $B(0,R_0)$, $B(0,2R_0)$ contained in
the global spatial domain $\Omega$. Consider a locally integrable
physical density of interest $f$, and let $0 < R \le R_0$; the time
interval of interest is $(0,T)$.

\medskip

In what follows, we utilize refined spatiotemporal cut-off functions
$\phi=\phi_{x_0,R,T}=\psi\,\eta$, where $\eta=\eta_T(t)\in C^\infty
(0,T)$ and $\psi=\psi_{x_0,R}(x)\in\mathcal{D}(B(x_0,2R))$
satisfying
\begin{equation}\label{eta_def}
0\le\eta\le1,\quad\eta=0\ \mbox{on}\ (0,T/3),\quad\eta=1\ \mbox{on}\
(2T/3,T),\quad\frac{|\eta'|}{\eta^{\rho_1}}\le\frac{C_0}{T}\;
\end{equation}
and
\begin{equation}\label{psi_def}
0\le\psi\le 1,\quad\psi=1\ \mbox{on}\ B(x_0,R),
\quad\frac{|\nabla\psi|}{\psi^{\rho_2}}\le\frac{C_0}{R}, \quad
\frac{|\triangle\psi|}{\psi^{2\rho_2-1}}\le\frac{C_0}{R^2}\;,
\end{equation}
for some $\frac{1}{2} <\rho_1,\rho_2 < 1$. In particular,
$\phi_0=\psi_0 \eta$ where $\psi_0$ is the spatial cut-off (as
above) corresponding to $x_0=0$ and $R=R_0$.

\medskip

For $x_0$ near the boundary of the macro-scale domain, $S(0,R_0)$,
we assume additional conditions,
\begin{equation}\label{psi_bd}
0\le\psi\le\psi_0
\end{equation}
and, if  $B(x_0,R)\not\subset B(0,R_0)$, then
$\psi\in\mathcal{D}(B(0,2R_0))$ with $\psi=1\ \mbox{on}\ B(x_0,R)
\cap B(0,R_0)$ satisfying, in addition to (\ref{psi_def}), the
following:
\begin{equation}\label{psi_def_add1}
\begin{aligned}
&
\psi=\psi_0\ \mbox{on the part of the cone centered at zero and passing through}\\
& S(0,R_0)\cap B(x_0,R)\ \mbox{between}\  S(0,R_0)\ \mbox{and}\
S(0,2R_0)
\end{aligned}
\end{equation}
and
\begin{equation}\label{psi_def_add2}
\begin{aligned}
&
\psi=0\ \mbox{on}\ B(0,R_0)\setminus B(x_0,2R)\ \mbox{and outside the part of the cone}\\
 &
 \mbox{centered at zero and passing through}\ S(0,R_0)\cap B(x_0,2R)\\
 &
 \mbox{between}\  S(0,R_0)\ \mbox{and}\ S(0,2R_0).
\end{aligned}
\end{equation}

\medskip

A \emph{physical scale $R$} is realized via suitable ensemble
averaging of the localized quantities with respect to
`$(K_1,K_2)$-covers' at scale $R$.

\medskip

Let $K_1$ and $K_2$ be two positive integers, and $0 < R \le R_0$. A
cover $\{B(x_i,R)\}_{i=1}^n$ of the macro-scale domain $B(0,R_0)$ is
a \emph{$(K_1,K_2)$-cover at scale $R$} if
\[
 \biggl(\frac{R_0}{R}\biggr)^3 \le n \le K_1
 \biggr(\frac{R_0}{R}\biggr)^3,
\]
and any point $x$ in $B(0,R_0)$ is covered by at most $K_2$ balls
$B(x_i,2R)$. The parameters $K_1$ and $K_2$ represent the maximal
\emph{global} and \emph{local multiplicities}, respectively.
Considering the time-averaged, per unit mass -- spatially localized
to the cover elements $B(x_i, R)$ -- local quantities
$\hat{f}_{x_i,R}$,
\[
\hat{f}_{x_i,R} = \frac{1}{T} \int_0^T \frac{1}{R^3}
\int_{B(x_i,2R)} f(x,t) \phi^\delta_{x_i,R,T} (x,t) \, dx \, dt
\]
(for some $0 < \delta \le 1)$, the \emph{ensemble average} $\langle
F\rangle_R$ is defined as
\[
 \langle F\rangle_R = \frac{1}{n} \sum_{i=1}^n
 \hat{f}_{x_i,R}\,.
\]

\medskip

The ensemble averages (with the fixed multiplicities $K_1$ and
$K_2$) act as a `detector' of significant sign-fluctuations of the
density in view. More precisely, if the density exhibits significant
sign-fluctuations on the scales comparable or greater than $R$, the
ensemble averages at scale $R$ -- with respect to all admissible
$(K_1,K_2)$-covers -- will respond by exhibiting a wide range of
values, from positive through zero to negative. This can be seen by
rearranging the cover elements to emphasize the positive and the
negative parts of the density, respectively. The larger the
multiplicities, the finer the detection. In contrast, if the
ensemble averages at scale $R$ -- with respect to all admissible
$(K_2,K_2)$-covers (again, with the fixed multiplicities) -- are
nearly independent on the particular choice of the cover, and say
positive, this indicates that the density is essentially positive on
the scales comparable or greater than $R$.

\medskip

As expected, for a non-negative density $f$, all the averages are
comparable to each other throughout the full range of scales $R$, $0
< R \le R_0$; in particular, they are all comparable to the simple
average over the integral domain. More precisely,
\begin{equation}\label{k*}
 \frac{1}{K_*} F_0 \le \langle F \rangle_R \le K_* F_0
\end{equation}
for all $0 < R \le R_0$, where
\[
 F_0=\frac{1}{T}\int \frac{1}{R_0^3} \int  f(x,t)
 \phi_0^\delta (x,t) \, dx \, dt,
\]
and $K_* = K_*(K_1,K_2) > 1$.

\bigskip

Consider now a global-in-time weak solution $u$ (say, a
global-in-time `local Leray solution' on $\mathbb{R}^3 \times
(0,\infty)$ in the sense of \cite{L-R02}), and let $T$ be the first
(possible) singular time.

\medskip

A spatiotemporal localization of the evolution of the enstrophy was
presented in \cite{GrZh06, Gr09}. Considering a $(K_1,K_2)$-cover
$\{B(x_i,R)\}_{i=1}^n$ at scale $R$, the following expression for
the time-integrated $B(x_i,R)$-localized vortex-stretching terms
transpires,

\begin{align}\label{loc}
 \int_0^t \int (\omega \cdot \nabla)u \cdot \phi_i \, \omega \; dx
 \; ds
 &=
 \int \frac{1}{2}|\omega(x,t)|^2\psi_i(x) \; dx + \int_0^t \int
 |\nabla\omega|^2\phi_i
 \;  dx \; ds\notag\\
 &- \int_0^t \int \frac{1}{2}|\omega|^2 \bigl((\phi_i)_s+\triangle\phi_i\bigr) \; dx
 \; ds\notag\\
 &- \int_0^t \int \frac{1}{2}|\omega|^2 (u \cdot \nabla\phi_i) \; dx
 \; ds,
\end{align}
for any $t$ in $(2T/3,T)$, and $1 \le i \le n$.

\medskip

Denoting the time-averaged local vortex-stretching terms per unit
mass associated to the cover element $B(x_i,R)$ by $VST_{x_i,R,t}$,
\begin{equation}\label{locfluxiav}
VST_{x_i,R,t} = \frac{1}{t} \int_0^t \frac{1}{R^3} \int (\omega
\cdot \nabla)u \cdot \phi_i \, \omega \, dx \, ds,
\end{equation}
the main quantity of interest is the ensemble average of
$\{VST_{x_i,R,t}\}_{i=1}^n$,
\begin{equation}\label{PhiR}
 \langle VST\rangle_{R,t} = \frac{1}{n}\sum_{i=1}^n VST_{x_i,R,t}.
\end{equation}

\medskip

Before stating the theorem, let us introduce the key macro-scale
quantities, $E_0$, $P_0$ and $\sigma_0$. Denote by $E_{0,t}$
time-averaged enstrophy per unit mass associated with the
macro-scale domain $B(0,2R_0) \times (0,t)$,
\[
 E_{0,t}=\frac{1}{t}\int_0^t \frac{1}{R_0^3} \int \frac{1}{2}|\omega|^2
 \phi_0^{1/2} \, dx \, ds,
\]
by $P_{0,t}$ a modified time-averaged palinstrophy per unit mass,
\[
 P_{0,t}= \frac{1}{t}\int_0^t \frac{1}{R_0^3} \int |\nabla\omega|^2
 \phi_0 \, dx \, ds
 + \frac{1}{t}\frac{1}{R_0^3} \int \frac{1}{2}|\omega(x,t)|^2
 \psi_0(x) \, dx
\]
(the modification is due to the shape of the temporal cut-off
$\eta$), and by $\sigma_{0,t}$ a corresponding Kraichnan-type scale,
\[
 \sigma_{0,t}=\biggl(\frac{E_{0,t}}{P_{0,t}}\biggr)^\frac{1}{2}.
\]

\medskip

Until now, there was no connection between the spatial macro-scale
$R_0$ and the global time scale $T$. At this point, it is convenient
to assume $R_0 \le \sqrt{T}$ (in addition, without loss of generality,
suppose that $T \le 1$); in the case $R_0 > \sqrt{T}$, the
proof can be modified similarly to the calculations in
\cite{DaGr11-2, DaGr12-1}.

\medskip

\begin{thm}
Let $u$ be a global-in-time local Leray solution on $\mathbb{R}^3
\times (0,\infty)$, regular on $(0,T)$. Suppose that, for some
$t \in (2T/3, T)$,
\begin{equation}\label{cond}
C \max\{M_0^\frac{1}{2}, R_0^\frac{1}{2}\} \,
\sigma_{0,t}^\frac{1}{2} < R_0
\end{equation}
where $\displaystyle{M_0=\sup_t \int_{B(0,2R_0)}
|u|^2 < \infty}$, and $C > 1$ a suitable constant depending only on the
cover parameters.

Then,
\begin{equation}\label{estimate}
 \frac{1}{C} \, P_{0,t} \le \langle VST \rangle_{R,t} \le
 C \, P_{0,t}
\end{equation}
for all $R$ satisfying
\begin{equation}\label{range}
C \max\{M_0^\frac{1}{2}, R_0^\frac{1}{2}\} \,
\sigma_{0,t}^\frac{1}{2} \le R \le R_0.
\end{equation}
\end{thm}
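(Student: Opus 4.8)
The plan is to start from the localized enstrophy identity \eqref{loc}, divide it by $t R^3$, and ensemble-average over a $(K_1,K_2)$-cover, thereby writing $\langle VST\rangle_{R,t}$ as the sum of two manifestly non-negative ``good'' contributions -- the ensemble average $A$ of the end-time enstrophy densities $\frac{1}{tR^3}\int\frac12|\omega(x,t)|^2\psi_i\,dx$ and the ensemble average $B$ of the palinstrophy densities $\frac{1}{tR^3}\int_0^t\int|\nabla\omega|^2\phi_i\,dx\,ds$ -- minus three error contributions coming from $(\phi_i)_s$, $\triangle\phi_i$, and the transport factor $u\cdot\nabla\phi_i$. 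The two good terms are exactly the ensemble-averaged versions of the two pieces making up the modified palinstrophy $P_{0,t}$, so the first step is to invoke the comparability principle \eqref{k*} (with density $|\nabla\omega|^2$ and $\delta=1$) together with its purely spatial analogue (applied to the end-time density $\frac12|\omega(x,t)|^2$) to conclude $\frac{1}{K_*}\,P_{0,t}\le A+B\le K_*\,P_{0,t}$, with $K_*=K_*(K_1,K_2)$.

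The second step is to show that, on the scale range \eqref{range}, each error term is a small fraction of $P_{0,t}$. Using the cut-off bounds \eqref{eta_def}--\eqref{psi_def}, the $(\phi_i)_s$ and $\triangle\phi_i$ terms are pointwise dominated by $\frac{C_0}{T}\,\frac12|\omega|^2\psi_i\eta^{\rho_1}$ and $\frac{C_0}{R^2}\,\frac12|\omega|^2\eta\psi_i^{2\rho_2-1}$; ensemble-averaging and comparing (via \eqref{k*}, using $\eta,\psi_i\le1$) to the macro-scale enstrophy $E_{0,t}$ bounds them by $c\,E_{0,t}/T$ and $c\,E_{0,t}/R^2$. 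Writing $E_{0,t}=\sigma_{0,t}^2\,P_{0,t}$ and using $R\ge C\sigma_{0,t}^{1/2}$, the normalizations $T\le1$, $R_0\le\sqrt T$ (so $T\ge R_0^2$), and the consequence $\sigma_{0,t}<1$ of \eqref{cond}, both of these are $\le\epsilon\,P_{0,t}$ once $C$ is large.

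The transport term is the \emph{main obstacle}. Here the plan is to pull out the velocity in $L^2$, bounding $\int|u|^2\le M_0$, and to estimate the remaining $|\omega|^2$-factor by a localized Gagliardo--Nirenberg interpolation $\|\omega\,\phi_i^{a}\|_{L^4}^2\lesssim\|\omega\,\phi_i^{a}\|_{L^2}^{1/2}\|\nabla(\omega\,\phi_i^{a})\|_{L^2}^{3/2}$, the cut-off commutator producing only a lower-order term of size enstrophy$/R^2$. After a Hölder in time (whose $t^{1/4}$ exactly cancels the normalizations) and a final appeal to \eqref{k*}, this yields a bound of the form $c\,\frac{M_0^{1/2}}{R}\,E_{0,t}^{1/4}P_{0,t}^{3/4}=c\,\frac{M_0^{1/2}}{R}\,\sigma_{0,t}^{1/2}\,P_{0,t}$; the modified shape of $P_{0,t}$ is precisely what lets the end-time enstrophy generated by $\eta$ be absorbed at this stage. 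Demanding this be $\le\epsilon P_{0,t}$ is exactly the requirement $R\ge C M_0^{1/2}\sigma_{0,t}^{1/2}$, the lower endpoint of \eqref{range}; this is where the ``square root of the Kraichnan-type micro-scale'' threshold and the factor $\max\{M_0^{1/2},1\}$ originate.

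Finally, I would collect the estimates: choosing $C=C(K_1,K_2)$ large enough that the total error is at most $\frac{1}{2K_*}P_{0,t}$ uniformly over \eqref{range} (the worst case occurring at the small end $R=C\max\{M_0^{1/2},1\}\sigma_{0,t}^{1/2}$), the lower bound gives $\langle VST\rangle_{R,t}\ge\frac{1}{K_*}P_{0,t}-\frac{1}{2K_*}P_{0,t}=\frac{1}{2K_*}P_{0,t}$ and the upper bound gives $\langle VST\rangle_{R,t}\le K_*P_{0,t}+\frac{1}{2K_*}P_{0,t}$, which is \eqref{estimate} after relabelling the constant; condition \eqref{cond} serves only to guarantee that the range \eqref{range} is nonempty. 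The boundary cover elements near $S(0,R_0)$ are handled uniformly by the cone conditions \eqref{psi_bd}--\eqref{psi_def_add2}, which keep $\psi_i\le\psi_0$ and preserve the comparability. I expect the Gagliardo--Nirenberg and time-integration bookkeeping for the transport term -- making sure all cut-off powers and the $\sup$-in-time versus time-averaged enstrophy are genuinely controlled by $E_{0,t}$ and the modified $P_{0,t}$ -- to be the only delicate part.
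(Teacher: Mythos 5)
Your proposal is correct and follows essentially the same route as the paper: the localized enstrophy identity \eqref{loc}, ensemble averaging, the comparability principle \eqref{k*} applied to the modified-palinstrophy ``good'' terms, and absorption of the cut-off and transport errors via $E_{0,t}=\sigma_{0,t}^2\,P_{0,t}$ on the range \eqref{range}, with \eqref{cond} guaranteeing the range is nonempty. The only (cosmetic) difference is in the transport term: the paper uses H\"older with $|u|^{4/3}\in L^{3/2}_x$, the Sobolev embedding into $L^6$, and a Young absorption of the gradient factor into the local palinstrophy -- producing the error $C\max\{M_0^2,1\}R^{-4}\times$ enstrophy -- whereas you keep the product $E^{1/4}P^{3/4}$ via Gagliardo--Nirenberg and convert through $\sigma_{0,t}$ after averaging (which needs H\"older for sums), both yielding the identical threshold $R\gtrsim \max\{M_0^{1/2},1\}\,\sigma_{0,t}^{1/2}$.
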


\begin{obs}
\emph{The macro-scale domain $B(0,R_0)$ is placed at the origin for
convenience only; it can be placed anywhere in $\mathbb{R}^3$.}
\end{obs}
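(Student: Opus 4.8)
The plan is to verify that nothing in the construction of the ensemble-averaging machinery, nor in the statement of the preceding theorem, privileges the origin as the center of the macro-scale domain. The argument is a translation-invariance observation, and I would carry it out by tracking how each ingredient transforms under the rigid shift $x \mapsto x + a$ that moves $B(0,R_0)$ to $B(a,R_0)$.

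\medskip

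First I would recall that the 3D Navier-Stokes system is invariant under spatial translations: if $u(x,t)$ solves the NSE with pressure $p(x,t)$, then $u_a(x,t) = u(x-a,t)$ solves the NSE with pressure $p_a(x,t) = p(x-a,t)$, and correspondingly $\omega_a(x,t) = \omega(x-a,t)$. Hence the class of global-in-time local Leray solutions (in the sense of \cite{L-R02}) is preserved, and regularity on $(0,T)$ is preserved as well. Next I would observe that the cut-off functions $\psi_{x_0,R}$ and $\phi_{x_0,R,T}$ defined via (\ref{eta_def})--(\ref{psi_def}) depend on $x_0$ only through the combination $x - x_0$; the defining conditions (support in $B(x_0,2R)$, equal to $1$ on $B(x_0,R)$, the gradient and Laplacian bounds) are stated in a translation-covariant way, so the admissible cut-offs for the shifted domain are exactly the translates of those for the origin-centered domain. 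The same is true of the boundary-adapted conditions (\ref{psi_bd})--(\ref{psi_def_add2}), since the cone centered at zero passing through $S(0,R_0)$ simply becomes the cone centered at $a$ passing through $S(a,R_0)$.

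\medskip

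Then I would note that a family $\{B(x_i,R)\}_{i=1}^n$ is a $(K_1,K_2)$-cover of $B(0,R_0)$ at scale $R$ if and only if $\{B(x_i + a, R)\}_{i=1}^n$ is a $(K_1,K_2)$-cover of $B(a,R_0)$: the counting bounds $(R_0/R)^3 \le n \le K_1 (R_0/R)^3$ and the local multiplicity bound are both invariant under rigid translation. Because the localized densities $\hat{f}_{x_i,R}$ in (\ref{locfluxiav}) and the ensemble averages (\ref{PhiR}) are built by integrating translation-covariant integrands against translated cut-offs over translated balls, a change of variables $y = x - a$ shows $\langle VST \rangle_{R,t}$ for $(u_a, B(a,R_0))$ equals $\langle VST \rangle_{R,t}$ for $(u, B(0,R_0))$, and likewise $E_{0,t}$, $P_{0,t}$, $\sigma_{0,t}$, and $M_0$ are unchanged. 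Consequently the hypothesis (\ref{cond}) and the conclusion (\ref{estimate})--(\ref{range}) are literally the same numbers for the shifted problem as for the origin-centered one. The one place where care is genuinely needed is the containment requirement $B(0,2R_0) \subset \Omega$: for the shifted domain this becomes $B(a,2R_0) \subset \Omega$, so the statement should be read as applying to any $a$ for which this holds (automatic when $\Omega = \mathbb{R}^3$, which is the setting of the theorem). I expect no substantive obstacle here — the only subtlety is bookkeeping, namely confirming that every constant ($d_0$, $C$, $K_*$, and the multiplicities $K_1, K_2$) is defined intrinsically and does not secretly encode the location of the origin; once that is checked, the remark follows by the change-of-variables identity above.
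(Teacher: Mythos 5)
Your proposal is correct, and it is essentially the paper's own (implicit) argument: the paper states this remark without proof precisely because it amounts to the routine translation-covariance check you carry out --- translation invariance of the NSE and of the local Leray class, translation covariance of the cut-offs (\ref{eta_def})--(\ref{psi_def_add2}) and of the $(K_1,K_2)$-cover conditions, and the resulting invariance of $\langle VST \rangle_{R,t}$, $E_{0,t}$, $P_{0,t}$, $\sigma_{0,t}$, $M_0$ and all constants under the shift $x \mapsto x+a$. Your added care about the containment $B(a,2R_0) \subset \Omega$ (automatic here since the theorem is set on $\mathbb{R}^3$) is a correct and worthwhile clarification, but it does not change the substance.
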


\begin{proof}
Recall that
\begin{align}\label{locc}
 \int_0^t \int (\omega \cdot \nabla)u \cdot \phi_i \, \omega \; dx
 \; ds
 &=
 \int \frac{1}{2}|\omega(x,t)|^2\psi_i(x) \; dx + \int_0^t \int
 |\nabla\omega|^2\phi_i
 \;  dx \; ds\notag\\
 &- \int_0^t \int \frac{1}{2}|\omega|^2 \bigl((\phi_i)_s+\triangle\phi_i\bigr) \; dx
 \; ds\notag\\
 &- \int_0^t \int \frac{1}{2}|\omega|^2 (u \cdot \nabla\phi_i) \; dx
 \; ds,
\end{align}
for any $t$ in $(2T/3,T)$, and $1 \le i \le n$; the last two terms
need to be estimated.

\medskip

For the first term, the properties of the spatiotemporal cut-off
function $\phi_i$ --setting $\rho_1=\rho_2=3/4$ -- paired with
the condition $t > \frac{2}{3}T \ge
\frac{2}{3} R_0^2 \ge \frac{2}{3} R^2$ yield

\begin{equation}\label{1}
\int_0^t \int \frac{1}{2}|\omega|^2
\bigl((\phi_i)_s+\triangle\phi_i\bigr) \; dx \; ds \le C
\frac{1}{R^2} \int_0^t \int |\omega|^2 \phi_i^{1/2} \; dx \; ds.
\end{equation}

\medskip

The second term -- the localized transport term -- will be estimated
similarly as in \cite{GrZh06}; the powers of the cut-off
function $\phi_i$ will be distributed somewhat differently leading
to a bit more precise estimate.

\medskip

Setting the cut-off parameters $\rho_1$ and $\rho_2$ to $7/8$, the
following sequence of bounds transpires.

\begin{align}\label{3}
\int_0^t \int \frac{1}{2}|\omega|^2 & (u \cdot \nabla\phi_i) \; dx \; ds\notag\\
 & \le C \frac{1}{R} \int_0^t \int \Bigl(
 |\omega|^2 \phi_i \Bigr)^{3/4} |u| \ \Bigl(
 |\omega|^2 \phi_i^{1/2}\Bigr)^{1/4} \; dx \; ds\notag\\
 &\le C \frac{1}{R} \int_0^t \Bigl(\int |u|^{4/3} |\omega|^2 \phi_i \; dx\Bigr)^{3/4} \
 \ \Bigl(\int
 |\omega|^2 \phi_i^{1/2}\; dx\Bigr)^{1/4} \, ds\notag.\\
\end{align}

\medskip

The first spatial integral is bounded as follows,

\begin{align}\label{4}
\int |u|^{4/3} |\omega|^2 \phi_i & \; dx\notag\\
& \le \Bigl(\sup_s \int_{B(x_i,2R)} |u|^2 \, dx\Bigr)^{2/3} \
\Bigl(\int \bigl(|\omega|
\phi_i^{1/2}\bigr)^6 \, dx\Bigr)^{1/3}\notag\\
& \le C \ \Bigl(\sup_s \int_{B(x_i,2R)} |u|^2 \, dx\Bigr)^{2/3} \
\Bigl(\int |\nabla \bigl( \phi_i^{1/2} \omega \bigr)|^2 \, dx\Bigr)
\end{align}
(the last line by the Sobolev Embedding Theorem).

\medskip

Combining the bounds (\ref{3}) and (\ref{4}) leads to

\begin{align}\label{5}
\int_0^t \int & \frac{1}{2}|\omega|^2  (u \cdot \nabla\phi_i) \; dx \; ds\notag\\
 & \le C  \frac{1}{R}  \Bigl(\sup_s \int_{B(x_i,2R)} |u|^2 \,
 dx\Bigr)^{1/2}  \Bigl(\int_0^t \int |\nabla \bigl( \phi_i^{1/2} \omega
 \bigr)|^2 \, dx \, ds\Bigl)^{3/4}  \Bigl(\int_0^t \int
 |\omega|^2 \phi_i^{1/2} \, dx \, ds\Bigr)^{1/4}\notag\\
 & \le \frac{1}{8} \int_0^t \int |\nabla \bigl( \phi_i^{1/2} \omega
 \bigr)|^2 \, dx \, ds + C \frac{\Bigl(\sup_s \int_{B(x_i,2R)} |u|^2 \,
 dx\Bigr)^2}{R^2} \frac{1}{R^2} \int_0^t \int
 |\omega|^2 \phi_i^{1/2} \, dx \, ds.
\end{align}

\medskip

Utilizing the commutator estimate (with $\rho_1=\rho_2=3/4$)

\begin{align}\label{6}
\int |\nabla(\phi_i^\frac{1}{2} & \omega)|^2 \, dx\notag\\
& \le 2 \int |\nabla\omega|^2 \phi_i \, dx + C \int
\biggl(\frac{|\nabla\phi_i|}{\phi_i^\frac{1}{2}}\biggr)^2 \,
|\omega|^2 \, dx\notag\\
& \le 2 \int |\nabla\omega|^2 \phi_i \, dx + C \frac{1}{R^2}
\int |\omega|^2 \phi_i^{1/2} \, dx\notag\\
\end{align}
in the first term of the above inequality yields the final bound for
the localized transport term,

\medskip

\begin{align}\label{7}
\int_0^t \int & \frac{1}{2}|\omega|^2  (u \cdot \nabla\phi_i) \; dx \; ds\notag\\
& \le \frac{1}{4} \int_0^t \int |\nabla\omega|^2 \phi_i \, dx \, ds
+ C \frac{1}{R^2}
\int_0^t \int |\omega|^2 \phi_i^{1/2} \, dx \, ds\notag\\
& + C \ \biggl(\frac{\sup_s \int_{B(x_i,2R)} |u|^2 \,
 dx}{R}\biggr)^2 \frac{1}{R^2} \int_0^t \int
 |\omega|^2 \phi_i^{1/2} \, dx \, ds.
\end{align}
Note that the factor
\[
\frac{\sup_s \int_{B(x_i,2R)} |u|^2 \, dx}{R}
\]
is scaling-invariant, and -- consequently -- the bound (\ref{7}) is
dimensionally correct. However, for an arbitrary global-in-time
local Leray solution it is not \emph{a priori} bounded (it is
\emph{a priori} bounded, e.g., assuming a uniform-in-time bound on
the $L^3$-norm; this, however, automatically implies regularity
\cite{ISS03}). The best one can do in general is to simply write
$\displaystyle{\sup_s \int_{B(x_i,2R)} |u|^2 \, dx \le M_0}$.

\medskip

Taking this into account, and applying the bounds (\ref{1}) and
(\ref{7}) in the expression (\ref{locc}) -- describing the dynamics of
the vortex-stretching term localized to the cover element
$B(x_i,R)$ -- leads to

\begin{equation}\label{loccc}
 \int_0^t \int (\omega \cdot \nabla)u \cdot \phi_i \, \omega \; dx
 \; ds =
 \int \frac{1}{2}|\omega(x,t)|^2\psi_i(x) \; dx + \int_0^t \int
 |\nabla\omega|^2\phi_i
 \;  dx \; ds \, + I_i,
\end{equation}
where
\[
|I_i| \le  \frac{1}{4} \int_0^t \int |\nabla\omega|^2 \phi_i \, dx
\, ds + C \max\{M_0^2, R_0^2\} \frac{1}{R^4} \int_0^t \int
|\omega|^2 \phi_i^{1/2} \, dx \, ds.
\]

\medskip

Ensemble-averaging (\ref{loccc}) and utilizing the inequality (\ref{k*})
several times implies that as long as
\begin{equation}\label{ir}
C \max\{M_0^\frac{1}{2}, R_0^\frac{1}{2}\} \,
\sigma_{0,t}^\frac{1}{2} < R_0,
\end{equation}
\[
 \frac{1}{C} \, P_{0,t} \le \langle VST \rangle_{R,t} \le
 C \, P_{0,t}
\]
for all $R$ satisfying
\[
C \max\{M_0^\frac{1}{2}, R_0^\frac{1}{2}\} \,
\sigma_{0,t}^\frac{1}{2} \le R \le R_0.
\]
\end{proof}

\medskip

\begin{obs}
\emph{Suppose that $T$ is the first (possible) singular time, and that the macro-scale
domain contains some of the spatial singularities (at time $T$). This, paired with
the assumption that $u$ is a global-in-time local Leray solution
implies}
\[
 \lim_{t \to T^-} \sigma_{0,t} = 0;
\]
\emph{hence, the condition (\ref{cond}) in the theorem is automatically
satisfied for any $t$ near the singular time $T$.}
\end{obs}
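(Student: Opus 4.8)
The plan is to use the splitting $\sigma_{0,t}^2 = E_{0,t}/P_{0,t}$ and to show separately that the numerator stays bounded while the denominator diverges as $t \to T^-$. The numerator is the easy half: since $u$ is a local Leray solution, the local energy inequality supplies the a priori bound $\sup_{t<T}\int_{B(0,2R_0)}|u|^2 + \int_0^T\int_{B(0,2R_0)}|\nabla u|^2\,dx\,ds < \infty$ over the whole (finite) interval up to the singular time. Because $|\omega| \le C|\nabla u|$ pointwise and $t \ge 2T/3 > 0$ on the range of interest, this at once yields $\limsup_{t\to T^-} E_{0,t} < \infty$.

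The substance is in the denominator. As $P_{0,t}$ is a sum of two nonnegative terms, it suffices to show that the instantaneous local-enstrophy term $y(t) := \frac{1}{R_0^3}\int \frac12 |\omega(x,t)|^2 \psi_0(x)\,dx$ diverges (the factor $1/t$ being harmless). I would argue in two steps. First, $\limsup_{t\to T^-} y(t) = \infty$: were the local enstrophy to stay bounded, a local version of the classical fact that \emph{uniformly bounded enstrophy forces regularity} --- obtained by bootstrapping the localized enstrophy estimates --- would continue $u$ regularly across $T$ inside $B(0,R_0)$, contradicting the hypothesis that $T$ is singular with a singularity in the macro-scale domain. Second, to promote $\limsup$ to a genuine limit, I would localize the enstrophy evolution using the identity (\ref{loc}) with $x_0 = 0$ and $R = R_0$, bounding the vortex-stretching and transport terms exactly as in (\ref{1})--(\ref{7}); after absorbing the cut-off errors into the controlled energy and dissipation at scale $R_0$, this produces a Riccati-type inequality $y'(t) \le c\,y(t)^3 + g(t)$ with $g$ bounded. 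Backward integration of $y' \le c y^3 + g$ at a blow-up then forces the quantitative lower bound $y(t) \gtrsim (T-t)^{-1/2}$, so $y(t) \to \infty$ and hence $P_{0,t} \to \infty$.

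Combining the two halves gives $\sigma_{0,t}^2 = E_{0,t}/P_{0,t} \to 0$; in particular the left-hand side of (\ref{cond}) tends to $0 < R_0$, so the condition holds for all $t$ sufficiently close to $T$, which is the assertion.

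The main obstacle is the \emph{localization} in the denominator step. Globally, bounded enstrophy implies regularity essentially immediately (the local existence time for $H^1$ data is controlled by the $H^1$-norm alone, so a uniform bound lets one step past $T$), but pinning a singularity to a single ball forces one to contend with the nonlocal pressure and with the cut-off error terms, and the clean Riccati inequality for $y$ emerges only after these are absorbed using the \emph{subcritical} a priori control already available at scale $R_0$. The one piece of good fortune is that sharp constants are irrelevant: any divergence of $P_{0,t}$ suffices, so the crude, dimensionally correct bounds appearing in the proof of the theorem --- most notably (\ref{7}) --- can be recycled essentially verbatim.
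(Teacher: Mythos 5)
Your numerator half is fine and matches what the paper's framework supplies: for a local Leray solution the local energy inequality gives $\sup_{t<T}\int_{B(0,2R_0)}|u|^2+\int_0^T\int_{B(0,2R_0)}|\nabla u|^2\,dx\,ds<\infty$, and since $|\omega|\le\sqrt{2}\,|\nabla u|$ pointwise and $t>2T/3$, the quantity $E_{0,t}$ stays bounded as $t\to T^-$. (Note the paper states this Remark without an explicit proof, so the comparison below is with the argument its framework leaves implicit.) The genuine gap is your Step 2. The Riccati inequality $y'\le c\,y^3+g$ cannot be obtained by ``bounding the vortex-stretching and transport terms exactly as in (\ref{1})--(\ref{7})'': those estimates bound only the two \emph{cut-off error} terms in (\ref{loc}) (the $(\phi_i)_s+\triangle\phi_i$ term and the transport term $u\cdot\nabla\phi_i$); the vortex-stretching term itself is never estimated from above anywhere in the paper --- it is the quantity the identity is solved for, and inserting (\ref{1}) and (\ref{7}) into (\ref{loc}) returns precisely (\ref{loccc}), i.e.\ a statement that $VST$ is \emph{comparable to} enstrophy plus palinstrophy, which is the opposite of what a Riccati inequality needs. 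To close a Riccati inequality in $y$ alone you must bound $\int(\omega\cdot\nabla)u\cdot\omega\,\phi_0\,dx$ by powers of $y$ plus absorbable dissipation, and this requires controlling $\nabla u$ on $B(0,2R_0)$ by $\omega$ on $B(0,2R_0)$ --- impossible, since Biot--Savart is nonlocal (it is this, rather than the pressure, that obstructs localization in the vorticity formulation). The best available substitutes are $\|\nabla u(t)\|_{L^2(B(0,2R_0))}\lesssim\|\omega(t)\|_{L^2(B(0,4R_0))}+R_0^{-1}M_0^{1/2}$, which brings in the enstrophy on a \emph{larger} ball that $y$ does not control (an infinite regress, with no global enstrophy to terminate it for a merely $L^2_{\mathrm{uloc}}$ solution), or the coefficient $\|\nabla u(t)\|_{L^2(B(0,2R_0))}^4$, which the local energy inequality places only in $L^{1/2}_t$, so neither the Riccati comparison nor Gronwall closes. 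Indeed, the statement your Step 2 aims at --- that the instantaneous \emph{local} enstrophy tends to infinity as a genuine limit at a singularity --- is far stronger than anything known, and, fortunately, is not needed.

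The repair, and almost surely the intended argument, exploits the structure of $P_{0,t}$ so that no $\limsup$-versus-$\lim$ issue ever arises: since $\phi_0(x,s)=\psi_0(x)\eta(s)$ does not depend on $t$, the map $t\mapsto\int_0^t\int|\nabla\omega|^2\phi_0\,dx\,ds$ is nondecreasing, and $P_{0,t}\ge\frac{1}{t\,R_0^3}\int_0^t\int|\nabla\omega|^2\phi_0\,dx\,ds$. Now argue by dichotomy. If $\int_0^T\int|\nabla\omega|^2\phi_0\,dx\,ds=\infty$, then $P_{0,t}\to\infty$ outright by monotone convergence (the factor $1/t\to 1/T$ is harmless), irrespective of the oscillation of the instantaneous enstrophy term. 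If instead this integral is finite, then $\omega\in L^2\bigl((2T/3,T);H^1(B(0,R_0))\bigr)$, hence $\omega\in L^2_tL^6_x$ there by Sobolev embedding; this is a \emph{subcritical} local condition on the vorticity ($2/2+3/6=3/2<2$), and local regularity theory for suitable weak solutions (Serrin-type interior criteria, or $\epsilon$-regularity in the spirit of \cite{CKN82}) then excludes singular points of $B(0,R_0)$ at time $T$, contradicting the hypothesis that the macro-scale domain contains a singularity. Hence only the first alternative occurs, $P_{0,t}\to\infty$ as a genuine limit, and with the bounded numerator, $\sigma_{0,t}^2=E_{0,t}/P_{0,t}\to 0$, so (\ref{cond}) holds for all $t$ near $T$. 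Note that your Step 1 already invokes exactly this kind of local-regularity lemma (uniformly bounded local enstrophy gives local $L^\infty_tL^6_x$, again subcritical); so the one external ingredient you need is already in your write-up --- it is the Riccati upgrade that should be discarded rather than repaired.
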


\bigskip

\bigskip

\noindent ACKNOWLEDGMENTS \ The authors express their gratitude to
Professor Peter Constantin for being an invariable source of
mathematical inspiration, as well as for all his support over the
years. R.D. and Z.G. acknowledge the support of the National Science
Foundation via the grants DMS-1211413 and DMS-1212023, respectively;
Z.G. acknowledges the support of the Research Council of Norway via
the grant 213473-FRINATEK.

\bigskip

\bigskip


\begin{thebibliography} {000000000000}


\bibitem[Beu33]{Beu33}
A. Beurling, These, Uppsala, 1933.

\bibitem[Le34]{Le34}
J. Leray, Acta Math. \textbf{63}, 193 (1934).

\bibitem[H51]{H51}
E. Hopf, Math. Nachr. \textbf{4}, 213 (1951).

\bibitem[FGT81]{FGT81}
C. Foias, C. Guillope and R. Temam, Comm. Partial Differential
Equations \textbf{6}, 329 (1981).

\bibitem[S81]{S81}
E. Siggia, J. Fluid Mech. \textbf{107}, 375 (1981).

\bibitem[CKN82]{CKN82}
L. Caffarelli, R. Kohn and L. Nirenberg, Comm. Pure Appl. Math.
\textbf{35}, 771 (1982).

\bibitem[AKKG87]{AKKG87}
W. Ashurst, W. Kerstein, R. Kerr and C. Gibson, Phys. Fluids
\textbf{30}, 2343 (1987).

\bibitem[Seg88]{Seg88}
S. Segawa, Proc. Am. Math. Soc. \textbf{103}, 177 (1988).

\bibitem[EssHa89]{EssHa89}
M. Essen and K. Haliste, Complex Variables \textbf{12}, 137 (1989).

\bibitem[FT89]{FT89}
C. Foias and R. Temam, J. Funct. Anal. \textbf{87}, 359 (1989).

\bibitem[Co90]{Co90}
P. Constantin, Comm. Math. Phys. \textbf{129}, 241 (1990).

\bibitem[SJO91]{SJO91}
Z.-S. She, E, Jackson and S. Orszag, Proc. R. Soc. Lond. A
\textbf{434}, 101 (1991).

\bibitem[JWSR93]{JWSR93}
J. Jimenez, A.A. Wray, P.G. Saffman and R.S. Rogallo, J. Fluid Mech.
\textbf{255}, 65 (1993).

\bibitem[VM94]{VM94}
A. Vincent and M. Meneguzzi, J. Fluid Mech. \textbf{225}, 245
(1994).

\bibitem[Co94]{Co94}
P. Constantin, SIAM Rev. \textbf{36}, 73 (1994).

\bibitem[CoFe93]{CoFe93}
P. Constantin and C. Fefferman, Indiana Univ. Math. J. \textbf{42},
775 (1993).

\bibitem[CPS95]{CPS95}
P. Constantin, I. Procaccia and D. Segel, Phys. Rev E \textbf{51},
3207 (1995).

\bibitem[GGH97]{GGH97}
B. Galanti, J.D. Gibbon and M. Heritage, Nonlinearity \textbf{10},
1675 (1997).

\bibitem[GrKu98]{GrKu98}
Z. Gruji\'c and I. Kukavica, J. Funct. Anal. \textbf{152}, 447
(1998).

\bibitem[FeTi98]{FeTi98}
A.B. Ferrari and E.S. Titi, Comm. Partial Differential Equations
\textbf{23}, 1 (1998).

\bibitem[GFD99]{GFD99}
J.D. Gibbon, A.S. Fokas and C.R. Doering, Phys. D \textbf{132}, 497
(1999).

\bibitem[Sol99]{Sol99}
A. Yu. Solynin, Journal of Mathematical Sciences \textbf{95}, 2256
(1999).

\bibitem[KT00]{KT00}
H. Kozono and Y. Taniuchi, Math. Z. \textbf{235}, 173, (2000).

\bibitem[CF01-1]{CF01-1}
D. C\'ordoba and C. Fefferman, Proc. Natl. Acad. Sci. USA
\textbf{98}, 4311 (2001).

\bibitem[CF01-2]{CF01-2}
D. C\'ordoba and C. Fefferman, Comm. Math. Phys. \textbf{222}, 293
(2001).

\bibitem[Gr01]{Gr01}
Z. Gruji\'c, Indiana Univ. Math. J. \textbf{50}, 1309 (2001).

\bibitem[CF02]{CF02}
D. C\'ordoba and C. Fefferman, Comm. Pure Appl. Math. \textbf{55},
255 (2002).

\bibitem[daVeigaBe02]{daVeigaBe02}
H. Beirao da Veiga and L.C. Berselli, Diff. Int. Eqs. \textbf{15},
345 (2002)

\bibitem[L-R02]{L-R02}
P.G. Lemarie-Rieusset, Recent developments in the Navier-Stokes
problem, CRC Press, 2002.

\bibitem[ISS03]{ISS03}
L. Iskauriaza, G. Seregin and V. Shverak, Uspekhi Mat. Nauk.
\textbf{58}, 3 (2003).

\bibitem[CFD04]{CFD04}
D. C\'ordoba, C. Fefferman and R. De La Llave, SIAM J. Math. Anal.
\textbf{36}, 204 (2004).

\bibitem[GrZh06]{GrZh06}
Z. Gruji\'c and Qi Zhang, Comm. Math. Phys. \textbf{262}, 555
(2006).

\bibitem[ChKaLe07]{ChKaLe07}
D. Chae, K. Kang and J. Lee,  Comm. PDE \textbf{32}, 1189 (2007).

\bibitem[Oh09]{Oh09}
K. Ohkitani, Geophys. Astrophys. Fluid Dyn, \textbf{103}, 113
(2009).

\bibitem[Gr09]{Gr09}
Z. Gruji\'c, Comm. Math. Phys. \textbf{290}, 861 (2009).

\bibitem[GrGu10-1]{GrGu10-1}
Z. Gruji\'c and R. Guberovi\'c, Comm. Math. Phys. \textbf{298}, 407
(2010).

\bibitem[Gu10]{Gu10}
R. Guberovi\'c, Discrete Cont. Dynamical Systems \textbf{27}, 231
(2010).

\bibitem[GM11]{GM11}
T. Gallay and Y. Maekawa, Comm. Math. Phys. \textbf{302}, 477
(2011).

\bibitem[DaGr11-1]{DaGr11-1}
R. Dascaliuc and Z. Gruji\'c, Comm. Math. Phys. \textbf{305}, 199
(2011).

\bibitem[DaGr11-2]{DaGr11-2}
R. Dascaliuc and Z. Gruji\'c, Comm. Math. Phys. \textbf{309}, 757
(2012).

\bibitem[DaGr12-1]{DaGr12-1}
R. Dascaliuc and Z. Gruji\'c, C. R. Math. Acad. Sci. Paris
\textbf{350}, 199 (2012).

\bibitem[DaGr12-2]{DaGr12-2}
R. Dascaliuc and Z. Gruji\'c, Comm. Math. Phys. (to appear)

http://arxiv.org/abs/1107.0058

\bibitem[Gr12]{Gr12}
Z. Gruji\'c (submitted)

http://arxiv.org/abs/1111.0217



\end{thebibliography}
\end{document}